\newcommand{\beqn}{\begin{equation}}
\newcommand{\eeqn}{\end{equation}}
\newtheorem{definition}{Definition}[section]
\newtheorem{remark}{Remark}[section]
\newtheorem{theorem}{Theorem}[section]
\newtheorem{proposition}[theorem]{Proposition}
\def\blackbox{\leavevmode\vrule height 5pt width 4pt depth 0pt\relax}
\newenvironment{proof}{\begin{trivlist}
		\item[]\hspace{0cm}{\bf Proof:}\hskip -5pt
		\hspace{0cm} }{\hfill $\blackbox$
	\end{trivlist}}
		\title{Rational RBF-based partition of unity method for efficiently and accurately approximating  3D objects}
\begin{document}
		\begin{center}
			\section*{Rational RBF-based partition of unity method for efficiently and  accurately approximating  3D objects}
		\end{center}
		\vskip 0.3cm
		\begin{center}
			{\large {E. Perracchione}}
			\vskip 0.3cm
			Dipartimento di Matematica \lq\lq Tullio  Levi-Civita\rq\rq, Universit\`a di Padova, Italy\\ 
			\vskip 0.2cm
			{\tt emma.perracchione@math.unipd.it}
			
		\end{center}
			
			\section*{Abstract}

We consider the problem of reconstructing 3D objects via meshfree interpolation methods. In this framework, we usually deal with large data sets and thus we develop an efficient local scheme via the well-known Partition of Unity (PU) method. The main contribution in this paper consists in constructing the local interpolants for the implicit interpolation by means of Rational Radial Basis Functions (RRBFs). Numerical evidence confirms that the proposed method is particularly performing when 3D objects, or more in general implicit functions defined by scattered data, need to be approximated. 

\section{Introduction}

The problem of reconstructing 3D objects is   common  in computer aided  design  and computer graphics. Truly performing mesh-dependent approaches have already been developed in this context (for a general overview on these approaches refer e.g. to \cite{Peigl}). This computational issue leads to the approximation of surfaces defined in terms of \emph{point cloud data}, i.e. a set of unorganized points in 3D. Since such data sets are usually {large}, it might be convenient to investigate, as in this paper,  meshfree approaches. Specifically, here we focus on Radial Basis Function (RBF) interpolants; refer e.g. to \cite{Fasshauer15,Fasshauer,Wendland05}. Because of the huge amount of data that are usually involved in the reconstruction of 3D objects, we decompose the original problem, which leads to solving  \emph{large} linear systems, into many small ones. This can be efficiently done by means of the Partition of Unity (PU) method (see e.g. \cite{Wendland02a}). In this way, the original reconstruction domain is split into many \emph{subdomains} or \emph{patches} and, as a consequence, only linear systems of moderate size need to be solved. 

However, the local approximants might suffer from instability and/or the local function values might oscillate defining steep gradients. These are the main reasons for which we consider, for each subdomain,  a Rational RBF (RRBF) expansion. The RRBF have been introduced in \cite{Jakobsson} (see also  \cite{rat0}) and further developed for collocation methods in \cite{sarra1}. The proposed scheme reduces to a largest eigenvalue problem which is efficiently solved by means of the so-called Deflation Accelerated Conjugate Gradient (DACG) algorithm (see e.g. \cite{bgp97nlaa}). Furthermore, we also provide pointwise error bounds for the local RRBF interpolants.

This investigation reveals that, for the reconstruction of 3D objects, the new method, namely RRBF-PU, performs better than the classical approach, i.e. the one based on local RBF interpolants \cite{Fasshauer}.  Furthermore,  comparisons with the BLOOCV-PU scheme, which is based on selecting \emph{optimal} local RBF interpolants, will be also carried out \cite{cavoretto_loocv}. 

To be more precise, for the approximation of point cloud data, we first need to define additional interpolation conditions and function values, so that we reduce to a standard interpolation issue on $\mathbb{R}^3$. This is also known as the \emph{implicit}  approach and consists in adding extra interpolation conditions; see e.g. \cite{carr01,Fasshauer}. 
Let us fix the space dimension $M=3$. Given a point cloud data set  $ {\cal X}_n= \{ \boldsymbol{x}_i \in \mathbb{R}^{M},$ $ i=1, \ldots, n \}$, that describes a surface on $\mathbb{R}^M$, i.e. a two dimensional manifold $\mathscr{S}$, the aim consists  in finding an approximate surface $ \mathscr{S}^{*}$. In this framework,  $\mathscr{S}$ is defined by all  points  $\boldsymbol{x}  \in \mathbb{R}^{M}$ such that
$
f\left(\boldsymbol{x} \right)=0,
$
for some (unknown) function $f$. To approximate the implicit function $f$, the trick is the one of adding an extra set of points, namely \emph{off-surface points}, so that we can then compute a three dimensional interpolant via the PU method by using local RRBFs.

Thus, the reconstruction scheme considered in this paper can be summarized in the following three steps (see e.g. \cite{Perracchione}):
\begin{enumerate}
	\item[i.] generate the extra off-surface points (see Section \ref{sec1}),
	\item [ii.] construct the PU structure for the local approach (see Section \ref{sec:2}),
	\item [iii.] compute a local RRBF approximant for each subdomain (see Section \ref{rat}).
\end{enumerate}

This scheme is tested via extensive numerical experiments carried out in  Section \ref{ne}. 
Finally, Section \ref{concl} is devoted to conclusions and future work.

\section{Extra off-surface points}
\label{sec1}	
To construct the additional data \cite{hoppe94,hoppe92}, we need to suppose that for each point $ \boldsymbol{x}_i \in {\cal X}_n$, its oriented normal $\boldsymbol{n}_i \in \mathbb{R}^{M} $ is known. If normals are not available, we describe a technique to approximate them in Subsection \ref{suseq1}.

\subsection{The implicit approach}

In practice, as usually done in literature (see e.g. \cite{Cuomo,Fasshauer,hoppe94}), for each  $ \boldsymbol{x}_i \in {\cal X}_n$, we compute the following two additional points:
\begin{equation*}
\boldsymbol{x}_{n+i}=\boldsymbol{x}_i+ \Delta_i \boldsymbol{n}_i,
\quad \textrm{and} \quad 
\boldsymbol{x}_{2n+i}= \boldsymbol{x}_i- \Delta_i \boldsymbol{n}_i,
\end{equation*}
where $\Delta_i > 0\in \mathbb{R}^{+}$, $i=1,\ldots,n$, are  chosen  stepsizes along the normals $\boldsymbol{n}_i$. In other words, we provide two other sets of points, namely ${\cal X}_{\Delta}^{+}=$ $  \{ \boldsymbol{x}_{n+1},\ldots, \boldsymbol{x}_{2n} \}$ and ${\cal X}_{ \Delta}^{-}=$ $ \{ \boldsymbol{x}_{2n+1},\ldots,$ $\boldsymbol{x}_{3n} \}$. Then, we define ${\cal X}_N= {\cal X}_{n} \cup {\cal X}_{\Delta}^{+} \cup {\cal X}_{\Delta}^{-}$ as the set of all  points on which the interpolation conditions are given. 

\begin{remark}
In what follows, for a given $\Delta > 0\in \mathbb{R}^{+}$, we simply fix $\Delta_i = \Delta$, $i=1,\ldots,n$. More precisely, to possibly avoid the effect of ill-conditioning, one can construct the extended data set with a stepsize $\Delta$ proportional to the so-called separation distance $q_{ {\cal X}_n}$ of the original data set $ {\cal X}_n$, where:
\begin{equation}
	q_{ {\cal X}_n} = \dfrac{1}{2} \min_{  i \neq k} \left\| \boldsymbol{x}_i - \boldsymbol{x}_k \right\|_2.
	\label{sd}
\end{equation}
 In particular, we take $\Delta= \xi q_{ {\cal X}_n}$, with $\xi=1/3$. This choice follows from the investigation carried out in \cite{Cuomo1}. Indeed, the authors prove that, for a suitable choice of the stepsizes  $\Delta_i > 0\in \mathbb{R}^{+}$, $i=1,\ldots,n$ and for $0<\xi\leq 1/3$, we have $q_{ {\cal X}_N}= \xi q_{ {\cal X}_n}$. Therefore, since the ill-conditioning grows with the decrease of  $q_{ {\cal X}_N}$, taking {small} values for $\xi$ is not recommended. On the other hand, as shown in \cite{Cuomo1}, for $\xi>1/3$ the added points might be close to each other and this might lead to the self-intersecting surfaces phenomenon. Therefore, fixing $\xi=1/3$ seems to provide a good compromise among accuracy and stability.
\end{remark}

Moreover, we construct the augmented set of function values ${\cal F}_{N}$. It is defined as the union of the following sets \cite{Fasshauer}
\begin{equation*}
\begin{array}{lll}
\vspace{0.1cm}
{\cal F}_{n} = \{f_i \hskip 0.1cm :\hskip 0.1cm   f(\boldsymbol{x}_i)=a, \hskip 0.1cm i =1, \ldots ,n\},\\
\vspace{0.1cm}
{\cal F}^{+}_{\Delta}  = \{f_i \hskip 0.1cm :\hskip 0.1cm f(\boldsymbol{x}_i)=b, \hskip 0.1cm i =n+1, \ldots ,2n\},\\
{\cal F}^{-}_{\Delta}  = \{f_i \hskip 0.1cm :\hskip 0.1cm f(\boldsymbol{x}_i)=c, \hskip 0.1cm i = 2n+1, \ldots ,3n \},
\end{array}
\end{equation*}
where the values of $a$, $b$ and $c \in \mathbb{R}$ are arbitrarily and usually set as $0$, $1$ and $-1$, respectively.
Now, after creating the data set, we  are able to compute an interpolant   whose $a$-contour (iso-surface) interpolates the given point cloud data.

However, note that to construct the augmented data set, we assume to know the normals to the implicit surface at each point. Unfortunately, in applications such normals are usually unknown and thus need to be estimated. For this reason, we illustrate a technique to calculate them.  

\subsection{Normals estimation}
\label{suseq1}

Let us fix a number $K<n$ and compute for each point $\boldsymbol{x}_i$, $i=1,\ldots,n$, its $K$  nearest neighbors set $\mathscr{K}_i$. Then, following the technique presented in  \cite{hoppe94,hoppe92}, for each data point  $\boldsymbol{x}_i$, $i=1,\ldots,n$, we compute a local oriented tangent plane $\mathscr{T}_i$. The latter is defined by a point, called  centre  $\boldsymbol{c}_i$,  and a unit normal
vector $\boldsymbol{n}_i$. More precisely, we have that
\begin{equation*}
\boldsymbol{c}_i= \frac{1}{K} \sum_{k \in \mathscr{K} ( \boldsymbol{x}_i)} \boldsymbol{x}_k.
\end{equation*}

Furthermore, since the normal  $\boldsymbol{n}_i$ is computed via  Principal Component Analysis (PCA), see e.g. \cite{Belton}, we evaluate the following covariance matrix $V \in \mathbb{R}^{3 \times 3}$ defined by
\begin{equation*}
V( \boldsymbol{x}_i)= \sum_{k \in \mathscr{K} ( \boldsymbol{x}_i)} (\boldsymbol{x}_k - \boldsymbol{c}_i)(\boldsymbol{x}_k - \boldsymbol{c}_i)^{T}.
\end{equation*}
It is trivially symmetric and positive semi-definite. Its eigenvalues $\lambda_{i1} \geq \lambda_{i2} \geq \lambda_{i3}$ and  corresponding  unit eigenvectors $\boldsymbol{v}_{i1} ,\boldsymbol{v}_{i2},\boldsymbol{v}_{i3}$ represent the plane  and the normal to such plane. If two eigenvalues, for instance $\lambda_{i1}$ and $\lambda_{i2}$, are close together and the third one is significantly smaller, then the eigenvectors for the first two eigenvalues 	$\boldsymbol{v}_{i1}$ and $\boldsymbol{v}_{i2}$  determine the plane, while  $\boldsymbol{v}_{i3}$  is the corresponding normal.

Unfortunately, this is not sufficient to construct the data set. In fact,  we need to orient the normals. 

We consider here the solution already proposed in \cite{hoppe94,hoppe92}, which consists in building, at first, the \emph{Riemann graph}  $G= \{ \cal{V}, \cal{E} \} $, with each node in $ \cal{V} $ corresponding to one of the 3D data points. To be more precise, in our case it is an undirected graph that has a vertex for every normal  $ \boldsymbol n_i$  and an edge $e_{ik}$  between the vertices of  $ \boldsymbol n_i$ and  $ \boldsymbol n_k$ if and only if $ i \in \mathscr{K}( \boldsymbol{x}_k)$ or  $ k \in \mathscr{K}( \boldsymbol{x}_i)$.

Therefore, to orient the normals, the idea consists in starting with an arbitrary normal orientation and then to propagate such orientation among neighboring points.  We assign to each edge $e_{ik}$ the cost 
\begin{equation*}
w(e_{ik})=  1-| \boldsymbol{n}_i \boldsymbol{n}^{T}_k|.
\end{equation*}
Since  $w(e_{ik})$  is small if the unoriented tangent planes are nearly parallel, we can propagate the orientation  by traversing the \emph{minimal spanning tree} of the Riemann graph.

For completeness, we recall  some  definitions (see e.g. \cite{Behzad} for further details).

\begin{definition}
	In any connected graph $ G$, a spanning tree is a subgraph of $ G$ having the following two properties:
	\begin{enumerate}
		\item[i.]  the subgraph is a tree, 
		\item[ii.]  the subgraph contains every vertex of $G$.
	\end{enumerate}
\end{definition}
\begin{definition}
	The weight of a tree is the sum of the weights of all edges in the tree.
\end{definition}
\begin{definition}
	Given a connected weighted graph $ G$ the  minimal spanning tree is the one having minimum weight among all spanning trees in the graph.
\end{definition}
Thus, to propagate the normal orientation,  we begin by choosing  an edge of minimum weight in the graph and  we then continue by selecting from the remaining edges an edge of minimum weight  until a spanning tree is formed. This scheme is known as Kruskal's algorithm and the reader can, for instance, refer to \cite{Gould} for further details.

To summarize, starting from the initial point cloud data set, we now obtain the set of nodes $ {\cal X}_N= \{ \boldsymbol{x}_i \in \mathbb{R}^{M},$ $ i=1, \ldots, N \}$ and the one of function values $ {\cal F}_N= \{f_i \in \mathbb{R},$ $ i=1, \ldots, N \}$. In other words, we reduce to  a \emph{standard} 3D interpolation problem which will be solved by means of the RRBF-PU method, as described in the next section.
Note that, we end up with a data set ${\cal X}_N$ that contains \emph{about} three times the number of points of the original one ${\cal X}_n$. In general we might have $N \neq 3n$. Indeed, we must exclude points that have zero normals.

\section{The partition of unity structure}	
\label{sec:2}

The PU method takes advantage of being a local technique, so that we always need to deal with {small} linear systems. It finds its origin around 1960 (see \cite{Shepard68a}) and is also well-known in the context of Partial Differential Equations (PDEs) (see e.g. \cite{Babuska-Melenk97,Larsson-Lehto}).

\subsection{Remarks on radial basis function interpolants}

In order to introduce the RRBF-PU  interpolation, we need to remark the main features of the standard RBF approximation theory. Thus, let 
$\Omega \subseteq \mathbb{R}^M$ be a bounded set,  $ {\cal X}_N = \{  \boldsymbol{x}_i, i = 1,  \ldots , N \} \subseteq \Omega$ the set of nodes  and $ {\cal F}_N= \{ f_i = f(\boldsymbol{x}_i) ,i=1, \ldots, N \}$ the set of function values, as defined in the previous section. A global interpolant  ${\cal R}: \Omega \longrightarrow \mathbb{R}$ is such that	
\begin{equation}
{\cal R}\left( \boldsymbol{x}_i\right)=f_i, \quad i=1, \ldots, N.
\label{int12}
\end{equation}	
Here we take ${\cal R} \in H_{\Phi} ({\cal X}_N)= \textrm{span} \{ \Phi(\cdot,\boldsymbol{x}_i), \boldsymbol{x}_i \in {\cal X}_N\}$, where $\Phi : \Omega \times \Omega \longrightarrow \mathbb{R}$ is a strictly positive definite and symmetric kernel \cite{Fasshauer15,Wendland05}. With this choice the interpolant \eqref{int12} assumes the form
\begin{equation*}
{\cal R}(\boldsymbol{x}) = \sum_{k = 1}^N \alpha_k \Phi(\boldsymbol{x}, \boldsymbol{x}_k), \quad \boldsymbol{x}\in\Omega.
\end{equation*}
Therefore, to determine the coefficients $\boldsymbol{\alpha}= (\alpha_1, \ldots, \alpha_N)^T$, one needs to solve $A \boldsymbol{\alpha}= \boldsymbol{f}$, where the entries of the matrix $A \in \mathbb{R}^{N \times N}$ are given by 
\begin{equation}
(A)_{ik}= \Phi (\boldsymbol{x}_i , \boldsymbol{x}_k), \quad i,k=1, \ldots, N,
\end{equation}
and  $\boldsymbol{f}= (f_1, \ldots, f_N)^T$. Existence and uniqueness of the solution are ensured by the fact that the kernel $\Phi$ is strictly positive definite and symmetric \cite{Fasshauer15,Wendland05}. Since here we take RBFs, we also have to take into account the \emph{shape parameter}, i.e. we assume that there exist  a function $ \phi: [0, \infty) \to \mathbb{R}$ and a shape parameter $\varepsilon>0$ such that 
\begin{equation*}
\Phi(\boldsymbol{x},\boldsymbol{y})=\phi_{\varepsilon}( ||\boldsymbol{x}-\boldsymbol{y}||_2):=\phi(r),
\end{equation*}
for all $\boldsymbol{x},\boldsymbol{y} \in \Omega$. To be more precise, since $\varepsilon$ is a scalar, we should refer to $\Phi$ as isotropic kernel. However, to simplify the notation, we will omit the term isotropic.

Note that, for each positive definite and symmetric kernel $\Phi$, we are able to associate  the so-called \emph{native space} ${\cal N}_{\Phi} (\Omega)$. To point out this fact,  we first introduce the following pre-Hilbert space with reproducing kernel $\Phi$  \cite{Fasshauer}
\begin{equation*}
H_{\Phi}(\Omega)=\textrm{span} \{ \Phi\left(\cdot,\boldsymbol{x}\right), \boldsymbol{x} \in \Omega\},
\end{equation*}
with the associated bilinear form $\left(\cdot,\cdot\right)_{H_{\Phi}(\Omega)}$ given by
\begin{equation*}
\left( \sum_{i=1}^l \alpha_i \Phi\left(\cdot,\boldsymbol{x}_i\right),  \sum_{k=1}^l \beta_k \Phi\left(\cdot,\boldsymbol{x}_k\right) \right)_{H_{\Phi}(\Omega)}= \sum_{i=1}^l \sum_{k=1}^l \alpha_i \beta_k \Phi\left(\boldsymbol{x}_i,\boldsymbol{x}_k\right),
\end{equation*}
where $l= \infty$ is also allowed.
Since $H_{\Phi}(\Omega)$ is only pre-Hilbert we define the native space ${\cal N}_{\Phi}(\Omega)$ of $\Phi$ to be the completion of $H_{\Phi}(\Omega)$ with respect to the norm $||\cdot||_{H_{\Phi}(\Omega)}$ so that $||f||_{H_{\Phi}(\Omega)} = ||f||_{{\cal N}_{\Phi}(\Omega)}$, for all $f \in H_{\Phi}(\Omega)$, see \cite{Fasshauer,Wendland05}.

Finally, to consistently introduce RRBFs, we have to define the so-called \emph{fill distance} and report the following theorem on polynomial precision (cf. \cite[Th. 3.14, p. 33]{Wendland05}).

The fill distance is defined as
\begin{equation*}
h_{{\cal X}_N} :=  \sup_{ \boldsymbol{x} \in \Omega} \left( \min_{ \boldsymbol{x}_k  \in {\cal X}_N} \left\| \boldsymbol{x} - \boldsymbol{x}_k \right\|_2 \right),
\label{fd}
\end{equation*}
and represents  the radius of the largest possible empty ball that can be placed among the data locations inside $\Omega$. We also remark that, as the fill-distance diminishes the interpolation error decreases, provided that sufficiently stable methods or RBFs with limited regularity are used (see e.g. \cite{Demarchi15,Fornberg}). Otherwise it is well-known that we might have numerical instability due to ill-conditioning. 

\begin{theorem} 
	Suppose that $ \Omega \subseteq  \mathbb{R}^M$ is compact and satisfies an	interior cone condition with angle $ \theta = \left(0, \pi /2 \right)$ 	and radius  $\gamma>0$. Fix $l \in \mathbb{N}$ and let $ \Pi_{l-1}^{M}$ be the set of polynomials of degree $l-1$.
	Then,  there exist $ h_0$, $C_1$, $C_2 > 0$	constants depending only on $l$, $ \theta$ and $\gamma$, such that for every $ {\cal X}_N= \{ \boldsymbol{x}_i  ,i=1, \ldots, N \} \subseteq \Omega$ with 	$h_{  {\cal X}_N} \leq h_0$ 
	and every $\boldsymbol{x} \in \Omega$, we can find real  numbers 	$v_k(\boldsymbol{x})$, $ k=1, \ldots, N, $ such that:
	\begin{itemize}
		\item[i.] $ \sum_{k=1}^{N} v_k \left(\boldsymbol{x} \right) p\left( \boldsymbol{x}_k\right)= p\left( \boldsymbol{x}\right)$, for all $p \in  \Pi_{l-1}^{M} $,
		\item[ii.] $  \sum_{k=1}^{N} | v_k \left(\boldsymbol{x} \right) | \leq C_1 $,
		\item[iii.] $v_k\left( \boldsymbol{x}\right)=0 $ provided that $\left\| \boldsymbol{x}- \boldsymbol{x}_k\right\|_2 \geq C_2 h_{  {\cal X}_N}$.
	\end{itemize}
	\label{tha}
\end{theorem}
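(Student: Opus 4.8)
I would prove this via the classical \emph{norming set} technique combined with a Hahn--Banach duality argument, so that the whole statement reduces to the following local claim: for every $\boldsymbol{x}\in\Omega$ there is a ball $B=B(\boldsymbol{c},\rho)$ with $\rho\asymp h_{{\cal X}_N}$, lying at distance $\lesssim h_{{\cal X}_N}$ from $\boldsymbol{x}$, such that the set $Z:={\cal X}_N\cap B$ of data sites inside $B$ is a norming set for $\Pi_{l-1}^{M}$, i.e. $\|p\|_{L^\infty(B)}\le 2\max_{\boldsymbol{z}\in Z}|p(\boldsymbol{z})|$ for all $p\in\Pi_{l-1}^{M}$. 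Granting such a $B$, the weights $v_k(\boldsymbol{x})$ are produced by duality, and property iii.\ comes for free because only the points of ${\cal X}_N$ lying in $B$ receive a nonzero weight.

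\textbf{Local geometry.} Fix $\boldsymbol{x}\in\Omega$. The interior cone condition provides a unit vector $\boldsymbol{\xi}(\boldsymbol{x})$ with the cone $C(\boldsymbol{x},\boldsymbol{\xi},\theta,\gamma)\subseteq\Omega$; for any $s\le\gamma$ its truncation $C(\boldsymbol{x},\boldsymbol{\xi},\theta,s)$ is still contained in $\Omega$ and contains a ball $B=B(\boldsymbol{c},\rho)$ with $\rho=c_\theta s$ and $\|\boldsymbol{x}-\boldsymbol{c}\|_2\le s$, where $c_\theta=\sin\theta/(1+\sin\theta)\in(0,1)$ depends only on $\theta$. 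I would take $s:=c_2 h_{{\cal X}_N}$ with a constant $c_2=c_2(l,M,\theta)$ fixed below, which is legitimate once $h_{{\cal X}_N}\le h_0:=\gamma/c_2$. Since $h_{{\cal X}_N}$ is the fill distance of ${\cal X}_N$ in $\Omega$, every point of the slightly shrunk concentric ball $B'=B(\boldsymbol{c},\rho-h_{{\cal X}_N})$ has a data site within $h_{{\cal X}_N}$ that still lies in $B\subseteq\Omega$; hence the fill distance of $Z$ inside $B'$ is $\le h_{{\cal X}_N}$, which I make $\le(\text{radius of }B')/c_1$ for the absolute constant $c_1=c_1(l,M)$ of the next paragraph by choosing $c_2\ge(c_1+1)/c_\theta$.

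\textbf{The norming inequality (the crux).} Rescaling $B'$ to the unit ball, it suffices to prove: there is $c_1=c_1(l,M)$ so that whenever $Z\subseteq B(0,1)$ has fill distance $\le 1/c_1$ in $B(0,1)$, then $\|p\|_{L^\infty(B(0,1))}\le 2\max_{\boldsymbol{z}\in Z}|p(\boldsymbol{z})|$ for all $p\in\Pi_{l-1}^{M}$. If $\boldsymbol{y}^\ast$ maximises $|p|$ on $B(0,1)$, pick $\boldsymbol{z}\in Z$ with $\|\boldsymbol{y}^\ast-\boldsymbol{z}\|_2\le 1/c_1$ (the segment $[\boldsymbol{y}^\ast,\boldsymbol{z}]$ stays in $B(0,1)$ by convexity); the multivariate Markov inequality $\|\nabla p\|_{L^\infty(B(0,1))}\le 2(l-1)^2\|p\|_{L^\infty(B(0,1))}$ (which reduces to the one-dimensional Markov/Chebyshev estimate along lines) together with the mean value theorem gives $|p(\boldsymbol{z})|\ge\bigl(1-2(l-1)^2/c_1\bigr)\|p\|_{L^\infty(B(0,1))}\ge\tfrac12\|p\|_{L^\infty(B(0,1))}$ once $c_1\ge 4(l-1)^2$. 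Undoing the scaling, $\|p\|_{L^\infty(B')}\le 2\max_{\boldsymbol{z}\in Z}|p(\boldsymbol{z})|$; and since the smallest ball $B^\ast\supseteq B'\cup\{\boldsymbol{x}\}$ is a fixed dilate (factor depending only on $\theta$) of $B'$, a Chebyshev-type growth bound for degree-$(l-1)$ polynomials yields $|p(\boldsymbol{x})|\le\|p\|_{L^\infty(B^\ast)}\le C_3\,\max_{\boldsymbol{z}\in Z}|p(\boldsymbol{z})|$ with $C_3=C_3(l,M,\theta)$.

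\textbf{Duality and conclusion.} Writing $Z=\{\boldsymbol{x}_{k_1},\dots,\boldsymbol{x}_{k_m}\}$ and $T:\Pi_{l-1}^{M}\to\mathbb{R}^m$, $Tp=(p(\boldsymbol{x}_{k_1}),\dots,p(\boldsymbol{x}_{k_m}))$, the norming inequality makes $T$ injective and shows that $\mu(Tp):=p(\boldsymbol{x})$ defines a functional of norm $\le C_3$ on the subspace $T(\Pi_{l-1}^{M})\subseteq(\mathbb{R}^m,\|\cdot\|_\infty)$. Extending $\mu$ to $(\mathbb{R}^m,\|\cdot\|_\infty)$ by Hahn--Banach without increasing its norm and identifying the dual with $(\mathbb{R}^m,\|\cdot\|_1)$ gives $u\in\mathbb{R}^m$ with $\sum_j|u_j|\le C_3$ and $p(\boldsymbol{x})=\sum_j u_j p(\boldsymbol{x}_{k_j})$ for all $p\in\Pi_{l-1}^{M}$. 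Setting $v_{k_j}(\boldsymbol{x})=u_j$ and $v_k(\boldsymbol{x})=0$ otherwise yields i.\ and ii.\ with $C_1=C_3$, while iii.\ holds with $C_2=(1+c_\theta)c_2$ since $v_k(\boldsymbol{x})\ne0$ forces $\boldsymbol{x}_k\in B$, whence $\|\boldsymbol{x}-\boldsymbol{x}_k\|_2\le\|\boldsymbol{x}-\boldsymbol{c}\|_2+\rho\le(1+c_\theta)s$. The only genuinely delicate point is the norming paragraph: one must invoke a sharp multivariate Markov inequality and, above all, interlock the constants $c_1,c_2,c_\theta$ carefully so that a \emph{single} threshold $h_0$ and a \emph{single} pair $C_1,C_2$ work uniformly over all $\boldsymbol{x}\in\Omega$ and all admissible ${\cal X}_N$; the cone bookkeeping is routine but must be carried out so that $B$ genuinely stays inside $\Omega$ while remaining of size $\asymp h_{{\cal X}_N}$.
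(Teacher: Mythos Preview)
The paper does not prove this theorem at all: it is quoted verbatim as a known result from Wendland's monograph (cf.\ \cite[Th.~3.14, p.~33]{Wendland05}) and is used only as a tool later on. So there is no ``paper's own proof'' to compare against.

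That said, your proof plan is correct and is, in outline, precisely the argument Wendland gives. The three ingredients --- (a) use the interior cone condition to place a ball $B\subseteq\Omega$ of radius $\asymp h_{{\cal X}_N}$ near $\boldsymbol{x}$; (b) show via a Markov inequality that the data sites in $B$ form a norming set for $\Pi_{l-1}^M$; (c) apply Hahn--Banach on $(\mathbb{R}^m,\|\cdot\|_\infty)$ to extract bounded weights supported on $Z={\cal X}_N\cap B$ --- are exactly the classical route. Your bookkeeping of the constants $c_\theta,c_1,c_2$ and the resulting $h_0,C_1,C_2$ is the right level of care, and your observation that property iii.\ is automatic from the support of $Z$ is the point of the construction. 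The only place one might tighten the write-up is the Markov step: the sharp constant on the unit ball for degree $l-1$ is $(l-1)^2$ rather than $2(l-1)^2$, but this is immaterial since any fixed constant depending only on $l$ and $M$ suffices here.
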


The method described in this subsection is effective only if we deal with data sets of moderately large sizes, otherwise the computational cost of computing the inverse of large interpolation matrices is prohibitive. Thus, in the next subsection, we introduce the PU method, that suitably works for huge sets of points.		

\subsection{The PU method}

For the PU scheme, we first need to divide the domain $\Omega$ into $d$ subdomains  $ \Omega_j$,  such that $ \Omega  \subseteq \cup_{j=1}^{d} \Omega_j$. As a consequence, they have to satisfy some mild overlap condition. Further, we require that such covering is also regular \cite{Wendland02a}.

\begin{definition}
	Suppose that $ \Omega \subseteq  \mathbb{R}^M$ is bounded and $ {\cal X}_N= \{ \boldsymbol{x}_i  ,i=1, \ldots, N \} \subseteq \Omega$ is given. An open and bounded covering $ \{ \Omega_j \}_{j=1}^{d}$ is called regular for $( \Omega, {\cal X}_N)$ if the following properties are satisfied:
	\begin{itemize}
		\item[i.] for each $ \boldsymbol{ x} \in \Omega$, the number of subdomains $ \Omega_j$, with $ \boldsymbol{x} \in \Omega_j$, is  bounded by a global constant $d_0$,
		\item[ii.] the local fill distances $ h_{ {\cal X}_{N_j}}$ are uniformly bounded by the global fill distance $h_{{\cal X}_N}$, where ${\cal X}_{N_j}= {\cal X}_N \cap \Omega_j$.
		\item[iii.] there exists $C_r>C_2$ such that each subdomain $ \Omega_j$ satisfies an interior cone condition with angle $ \tilde{\theta} \in (0, \pi/2) $ and radius $\tilde{\gamma}= C_r  h_{ {\cal X}_{N}}$. 
	\end{itemize}
\end{definition}

Once we have such covering of the domain, we construct for each subdomain a local interpolant. Then, the local fits are glued together by means of $d$ weight functions $W_j$, $j=1,\ldots,d$, i.e. the PU interpolant ${\cal I}$ assumes the form:
\begin{equation*}
{\cal I}\left( \boldsymbol{x}\right)= \sum_{j=1}^{d} L_j\left( \boldsymbol{x} \right) W_j \left( \boldsymbol{x}\right), \quad \boldsymbol{x} \in \Omega,
\end{equation*}
where $L_j$, $j=1,\ldots, d$, are local interpolants such as  RBF approximants defined in the previous subsection.
Before discussing which local interpolants $L_j$, $j=1,\ldots,d$, are used,  we first recall that $ \{ W_j \}_{j=1}^{d}$ must form a $k$-stable partition of unity, i.e. they form a family of compactly supported, non-negative, continuous functions such that
\begin{itemize}
	\item[i.] $\text{supp}\left(W_j\right) \subseteq \Omega_j$,	
	\item[ii.] 	$\sum_{j=1}^{d} W_j\left(\boldsymbol{x}\right) = 1, \quad \boldsymbol{x} \in \Omega$,
	\item[iii.] for every $ \boldsymbol{\mu} \in \mathbb{N}_0^{M}$, with $| \boldsymbol{\mu} |	 \leq k$, there exists a constant $C_{ \boldsymbol{\mu}} >0$ such that
	\begin{displaymath}
	\left\| D^{ \boldsymbol{\mu}} W_j \right\|_{L^{ \infty} ( \Omega_j)} \leq  \dfrac{C_{ \boldsymbol{\mu}}}{ \left(  \sup_{ \boldsymbol{x}, \boldsymbol{y} \in \Omega_j} \left\| \boldsymbol{x}-\boldsymbol{y} \right\|_2 \right)^{ | \boldsymbol{\mu}|}}, \quad j=1, \ldots, d. 
	\end{displaymath}
\end{itemize}

In what follows,  we consider the so-called \emph{Shepard's weights} \cite{Shepard68a}
\begin{equation*}
W_j\left(\boldsymbol x\right) = \dfrac{ \displaystyle \bar{W}_j \left(\boldsymbol x\right)}{  \displaystyle \sum_{k=1}^{d} \bar{W}_k \left(\boldsymbol x\right)}, \quad j=1, \ldots,d,
\end{equation*}
where $\bar{W}_j $ are compactly supported functions, with support on $\Omega_j$. 

In the next section, we describe which local approximants $L_j$ we consider here, i.e.  rational RBF expansions. However, before going into details, note that since the functions $W_j$, $j=1, \ldots, d$, form a partition of unity, if the local fits $L_j$, $j=1,\ldots,d$, satisfy the interpolation conditions then the global PU approximant trivially inherits the interpolation property. 

\section{Local rational radial basis function interpolants}
\label{rat}

For a classical RBF-PU interpolant there might be problems such as ill-conditioning (especially when the shape parameter tends to zero) and this might lead to inaccurate solutions when functions with steep gradients (or even implicit functions) are considered. These are the main reasons for which we introduce RRBF local interpolants. Indeed, as numerical evidence confirms, they are more robust for the reconstruction of 3D objects. This is also consistent with the well-known robustness of univariate rational polynomial approximation compared to the standard one. Unfortunately, differently from RRBF interpolation, the rational polynomial approximation is quite hard to extend in higher dimensions (refer e.g. to \cite{Hu}).	

On a subdomain $\Omega_j$,   we here define $L_j$ as:
\begin{equation*}
{L}_j (\boldsymbol{x}) = \dfrac{{\cal R}_j^{1}(\boldsymbol{x})}{{\cal R}_j^{2}(\boldsymbol{x})} = \dfrac{\sum_{i=1}^{N_j} \alpha^j_{i} \Phi (\boldsymbol{x}, \boldsymbol{x}_{i}^j)}{\sum_{k=1}^{N_j} \beta_{k}^j \Phi (\boldsymbol{x}, \boldsymbol{x}_{k}^j)},
\label{RRBF1}
\end{equation*}
where $N_j$ is the number of points lying on $\Omega_j$ and we assume ${\cal R}_j^{2}(\boldsymbol{x}) \neq 0$, $\boldsymbol{x} \in \Omega$.
It is easy to see that imposing the interpolation conditions leads to a  system that is underdetermined. Thus, we add extra conditions. In practice, to have a well-posed problem on $\Omega_j$, we need to look for a vector  \cite{Jakobsson}
$$
\boldsymbol{q}_j=({\cal R}_j^{2}(\boldsymbol{x}^j_1), \ldots, {\cal R}_j^{2}(\boldsymbol{x}_{N_j}^j))^T,$$
so that we can construct its relative RBF interpolant ${\cal R}_j^{2}$ in the standard way. Then, it is easy to see that, once we have $\boldsymbol{q}_j$, we are able to uniquely compute ${\cal R}_j^{1}$ such that it interpolates the function values $\boldsymbol{p}_j=D_j\boldsymbol{q}_j$, where  $D_j=\textrm{diag}(f^j_1, \ldots, f^j_N)$. To be more precise, let us assume that  $\boldsymbol{p}_j$ and $\boldsymbol{q}_j$ are given on $\Omega_j$, to compute the local RRBF interpolant, we need to solve 
\begin{equation}
\quad A_j \boldsymbol{\alpha}_j= \boldsymbol{p}_j, \quad \textrm{and} \quad A_j \boldsymbol{\beta}_j= \boldsymbol{q}_j.
\label{sisex}
\end{equation}
The existence and uniqueness of the solutions of \eqref{sisex} trivially follows from the fact that the kernel we consider is strictly positive definite. Thus, the problem now turns into the one of determining for each patch the vectors $\boldsymbol{p}_j$ and $\boldsymbol{q}_j$. 
Of course, it is reasonable to select their values such that their native space norm relative to the size of their values is as small as possible. In \cite{Jakobsson} the authors proved that this leads to define
for each patch $\Omega_j$  the vector $\boldsymbol{q}_j$ as the eigenvector  associated to the smallest eigenvalue of the problem
$\Lambda_j \boldsymbol{q}_j= \lambda_j \Theta_j \boldsymbol{q}_j,
\label{eigp1}$
with
\begin{equation*}
\Lambda_j=\dfrac{1}{||\boldsymbol{f}_j||_2^2}  D^T_j A_j^{-1} D_j +A_j^{-1}, \quad \textrm{and} \quad \Theta_j= \dfrac{1}{||\boldsymbol{f}_j||_2^2} D_j^TD_j+I_{N_j},
\label{eigp2}
\end{equation*}
where  $I_{N_j}$ is the $N_j \times N_j $ identity matrix and $A_j$ is the standard local kernel matrix:
\begin{align}
A_j = \left(
\begin{array}{cccc}
\Phi  (\boldsymbol{x}^j_1,\boldsymbol{x}^j_1 )   & \cdots  & \Phi(\boldsymbol{x}^j_1,\boldsymbol{x}_{N_j}^j  )        \\
\vdots & \ddots & \vdots   \\
\Phi  (\boldsymbol{x}^j_{N_j},\boldsymbol{x}^j_1)    & \cdots  & \Phi  (\boldsymbol{x}^j_{N_j},\boldsymbol{x}^j_{N_j} )        
\end{array}
\right).
\label{am}
\end{align}
Note that the so-constructed method is not able to handle the case where $f$ is zero at some data sites. However, for our application this is not restrictive. Indeed, we only need to carefully define the extra function values  described in Section \ref{sec1}. For instance, in what follows we take $a=2$, $b=3$ and $c=1$. Finally, note that we assume ${\cal R}^{2}(\boldsymbol{x}) \neq 0$, $\boldsymbol{x} \in \Omega$. If it does not hold, it is sufficient to impose the following constraints for the minimization problem in \eqref{eq2}: $q_i^j>0$, $i=1,\ldots,M$. Once we have such function values, we can use the method proposed in \cite{Derossi17} to obtain a positive approximant. 

Acting as explained above implies that we construct a RRBF interpolant by means of the standard RBF interpolation matrix $A_j$. This enables us to give error bounds. First note that for $\boldsymbol{x} \in \Omega$, we have that
\label{eq2}
\begin{eqnarray}
\left|f(\boldsymbol{x})- {\cal I}(\boldsymbol{x}) \right| & \leq & \sum_{j=1}^{d} \left|f(\boldsymbol{x}) - L_j(\boldsymbol{x}) \right| W_j (\boldsymbol{x}) , \nonumber \\
& \leq & \max_{j=1, \ldots, d} \left\|f-L_j \right\|_{L_{\infty}(\Omega_j)}:=\left\|f_{| \Omega_t}-L_t \right\|_{L_{\infty}(\Omega_t)}. \\
\nonumber
\end{eqnarray} 
In other words the PU approximation error is governed by the worst local error.
To formulate error bounds, we have to think of $\boldsymbol{p}_t$ and $\boldsymbol{q}_t$ as values sampled form some functions $p_t$ and $q_t$ $\in {\cal N}_{\Phi}(\Omega_t)$.
Furthermore, we also need to define the space $C_{ \nu}^{k}   ( \mathbb{R}^{M}  ) $ of all functions $f \in C^k$ whose derivatives of order $ |\boldsymbol{\mu} |=k $ satisfy $ D^{ \boldsymbol{\mu}} f \left( \boldsymbol{x} \right) = {\cal O} \left( || \boldsymbol{x} ||_2^{ \nu} \right) $ for $ || \boldsymbol{x} ||_2 \longrightarrow 0$. Then, the $\left\|f_{| \Omega_t}-L_t \right\|_{L_{\infty}(\Omega_t)}$ can be bounded by means of the following proposition. 

\begin{proposition}
	Suppose $\phi \in C_k^{\nu}(\mathbb{R}^M)$ is strictly positive definite, $p_t$ and $q_t \in {\cal N}_{\Phi}(\Omega_t)$  and let $   {\cal X}_t= \{ \boldsymbol{x}_i  ,i=1, \ldots, N_t \} \subseteq \Omega_t$, then there exists a constant $C$ independent of $h_{{\cal X}_{N_t}}$ such that
	\begin{equation*}
	|| f_{| \Omega_t} -  {L}_t||_{L_{\infty}(\Omega_t)} \leq \dfrac{ {C} h_{{\cal X}_{N_t}}^{(k+\nu)/2}}{||{\cal R}_t^{2}||_{L_{\infty}(\Omega_t)}} \left( ||f_{| \Omega_t}||_{L_{\infty}(\Omega_t)}  ||q_t||_{{\cal N}_{\Phi}(\Omega_t)} +  ||p_t||_{{\cal N}_{\Phi}(\Omega_t)}  \right) .
	\end{equation*}
	\label{thpu1}
\end{proposition}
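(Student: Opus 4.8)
The plan is to trace the rational local error back to two applications of the classical RBF interpolation error estimate, one for the numerator interpolant ${\cal R}_t^{1}$ and one for the denominator interpolant ${\cal R}_t^{2}$. Since ${\cal R}_t^{2}(\boldsymbol{x}) \neq 0$ on $\Omega$, for every $\boldsymbol{x} \in \Omega_t$ we can write
\begin{equation*}
f(\boldsymbol{x}) - L_t(\boldsymbol{x}) = f(\boldsymbol{x}) - \frac{{\cal R}_t^{1}(\boldsymbol{x})}{{\cal R}_t^{2}(\boldsymbol{x})} = \frac{f(\boldsymbol{x})\,{\cal R}_t^{2}(\boldsymbol{x}) - {\cal R}_t^{1}(\boldsymbol{x})}{{\cal R}_t^{2}(\boldsymbol{x})}.
\end{equation*}
Regarding $\boldsymbol{p}_t$ and $\boldsymbol{q}_t$ as samples of $p_t, q_t \in {\cal N}_{\Phi}(\Omega_t)$, the relation $\boldsymbol{p}_t = D_t \boldsymbol{q}_t$ becomes $p_t = f\,q_t$ on $\Omega_t$; this is the crucial identity. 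Using it, add and subtract $f\,q_t$ and $p_t$ in the numerator:
\begin{equation*}
f\,{\cal R}_t^{2} - {\cal R}_t^{1} = f\,({\cal R}_t^{2} - q_t) + (f\,q_t - p_t) + (p_t - {\cal R}_t^{1}) = f\,({\cal R}_t^{2} - q_t) + (p_t - {\cal R}_t^{1}),
\end{equation*}
so that the numerator is exactly a combination of the interpolation residuals $q_t - {\cal R}_t^{2}$ and $p_t - {\cal R}_t^{1}$, weighted by $f$ and by $1$.

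I would then invoke the standard pointwise RBF error bound. By regularity of the covering each $\Omega_t$ satisfies an interior cone condition, so Theorem~\ref{tha} on polynomial reproduction, combined with the usual Taylor-expansion / power-function argument for kernels in $C_k^{\nu}(\mathbb{R}^M)$, provides a constant $C$ independent of $h_{{\cal X}_{N_t}}$ such that every $g \in {\cal N}_{\Phi}(\Omega_t)$ with standard RBF interpolant ${\cal R}_g$ on ${\cal X}_t$ satisfies
\begin{equation*}
\|g - {\cal R}_g\|_{L_{\infty}(\Omega_t)} \leq C\, h_{{\cal X}_{N_t}}^{(k+\nu)/2}\, \|g\|_{{\cal N}_{\Phi}(\Omega_t)}.
\end{equation*}
Applying this with $g = q_t$ (giving ${\cal R}_g = {\cal R}_t^{2}$) and with $g = p_t$ (giving ${\cal R}_g = {\cal R}_t^{1}$), then using the triangle inequality on the displayed splitting together with $|f(\boldsymbol{x})| \leq \|f_{|\Omega_t}\|_{L_{\infty}(\Omega_t)}$, yields
\begin{equation*}
\|f\,{\cal R}_t^{2} - {\cal R}_t^{1}\|_{L_{\infty}(\Omega_t)} \leq C\, h_{{\cal X}_{N_t}}^{(k+\nu)/2} \left( \|f_{|\Omega_t}\|_{L_{\infty}(\Omega_t)}\, \|q_t\|_{{\cal N}_{\Phi}(\Omega_t)} + \|p_t\|_{{\cal N}_{\Phi}(\Omega_t)} \right).
\end{equation*}
Dividing by ${\cal R}_t^{2}(\boldsymbol{x})$, controlling the denominator through $\|{\cal R}_t^{2}\|_{L_{\infty}(\Omega_t)}$, and taking the supremum over $\boldsymbol{x} \in \Omega_t$ gives the stated estimate.

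I expect the delicate point to be precisely the passage from the nodal identity $\boldsymbol{p}_t = D_t \boldsymbol{q}_t$ to the functional identity $p_t = f\,q_t$: the data only fix the values at ${\cal X}_t$, and to make the residual $f\,q_t - p_t$ vanish \emph{identically} on $\Omega_t$ (not merely on ${\cal X}_t$) one must commit to this particular native-space extension and, in particular, assume $f\,q_t \in {\cal N}_{\Phi}(\Omega_t)$ — this is exactly what ``thinking of $\boldsymbol{p}_t,\boldsymbol{q}_t$ as sampled from $p_t,q_t$'' is meant to encode. A secondary technical nuisance is the denominator: since $L_t$ is only defined where ${\cal R}_t^{2} \neq 0$, a uniform positive lower bound on $|{\cal R}_t^{2}|$ over $\Omega_t$ is implicitly needed, and it is this quantity (written here as $\|{\cal R}_t^{2}\|_{L_{\infty}(\Omega_t)}$) that enters the bound; everything else is a routine combination of the triangle inequality and the two native-space error estimates.
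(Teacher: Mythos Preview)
Your argument is essentially the paper's own: write $f-L_t=(f\,{\cal R}_t^{2}-{\cal R}_t^{1})/{\cal R}_t^{2}$, insert $q_t f$ (and, equivalently, $p_t$) in the numerator, and bound each residual by the standard native-space estimate (Wendland, Th.~11.11) for ${\cal R}_t^{2}$ interpolating $q_t$ and ${\cal R}_t^{1}$ interpolating $p_t$. Your caveats are apt --- the paper is terse exactly where you are explicit, merely saying ``taking into account how $\boldsymbol{p}_t$ and $\boldsymbol{q}_t$ are related'' for the second term, and it likewise writes $\|{\cal R}_t^{2}\|_{L_\infty(\Omega_t)}$ in the denominator where, as you note, a lower bound on $|{\cal R}_t^{2}|$ is what is actually needed.
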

\begin{proof}
	At first note that
	\[\arraycolsep=1.4pt\def\arraystretch{3}	
	\begin{array}{rcl}
	|| f_{| \Omega_t} -  {L}_t||_{L_{\infty}(\Omega_t)} 
	&=&\left|\left| \dfrac{{\cal R}_t^{2} f_{| \Omega_t} -{\cal R}_t^{1}}{{\cal R}_t^{2}} \right|\right|_{L_{\infty}(\Omega_t)},\\
	&=&\left| \left| \dfrac{\left({\cal R}_t^{2} f_{| \Omega_t}- q_tf_{| \Omega_t} \right) + \left(q_tf_{| \Omega_t}-{\cal R}_t^{1}  \right)}{{\cal R}_t^{2}} \right| \right|_{L_{\infty}(\Omega_t)}.\\
	\end{array} 
	\]
	Furthermore, we know that under the assumptions of this proposition, if $\Omega_t$ satisfies  an interior cone condition with constants $\tilde{\theta}$ and $\tilde{\gamma}$ and if $   {\cal X}_{N_t}= \{ \boldsymbol{x}_i  ,i=1, \ldots, N_t \} \subseteq \Omega_t$  satisfies $h_{{\cal X}_{N_t}} \leq h_0$,  there exists a constant ${C}_0$ independent of $h_{{\cal X}_{N_t}}$ and depending on $M, \tilde{\theta}$ and $ \phi$, such that (cf. \cite[Th. 11.11, p. 181]{Wendland05}) 
	\begin{equation}
	\left| \left| {\cal R}_t^{2} - q_t \right|  \right|_{L_{\infty}(\Omega_t)} \leq   {C_0} h_{{\cal X}_{N_t}}^{(k+\nu)/2} ||q_t||_{{\cal N}_{\Phi}(\Omega_t)},
	\label{eq3}
	\end{equation}
	where $h_0=\tilde{\gamma} /C_2$, with $C_2$ is from Theorem \ref{tha} applied to a local setting.
	Note that, because of the regular covering, \eqref{eq3} holds. Indeed, all the subdomains satisfy an interior cone condition and the local fill distances are uniformly bounded by the global one.
	Moreover, taking into account how $\boldsymbol{p}_t$ and $\boldsymbol{q}_t$ are related, we obtain
	\[\arraycolsep=1.4pt\def\arraystretch{2}
	\begin{array}{rcl}
	||f_{| \Omega_t} -L_t||_{L_{\infty}(\Omega_t)}
	& \leq &\dfrac{ {C} h_{{\cal X}_{N_t}}^{(k+\nu)/2} }{||{\cal R}^{2}_t||_{L_{\infty}(\Omega_t)}} \left( ||f_{| \Omega_t} ||_{L_{\infty}(\Omega_t)} ||q_t||_{{\cal N}_{\Phi}(\Omega_t)} +  ||p_t||_{{\cal N}_{\Phi}(\Omega_t)} \right) .\\
	\end{array}
	\]
\end{proof}

\begin{remark}
	Such proposition confirms that, as for the classical PU interpolant which makes use of local RBFs, the rational PU interpolant preserves the local approximation error. The bound reported in Proposition \ref{thpu1} shows  strong similarities with the ones for the standard interpolants. This could be expected, indeed, we are considering an interpolant which is essentially a rescaled classical RBF approximant. Nevertheless, despite such bounds are similar, since we rescale with a quantity depending on the largest eigenvalue of the kernel matrix, numerically we expect a more accurate computation. This effect should be more evident for kernels having a fast decay, such as the Gaussian. 
\end{remark}

\begin{remark}
	In the PU framework, an important computational issue consists in organizing points among the subdomains. To achieve this aim we use the so-called integer-based data structure, refer e.g. to \cite{Cavoretto14c,cavoretto_loocv} for further details. 
\end{remark}

\begin{remark}
	Note that the proposed scheme is also able to handle the use of anisotropic kernels. Indeed, any isotropic radial kernel can be turned into an anisotropic one by using a weighted $2$-norm instead of an unweighted one \cite{Fasshauer15}. It is enough to replace the scalar value of the shape parameter $\varepsilon$ with a symmetric positive definite matrix ${\Xi}$. More precisely, taking ${\Xi} = \textrm{diag} (\varepsilon_1, \ldots, \varepsilon_M)$  allows to chose a different scaling along the dimensions of the problem. However, since we consider quasi-uniform points, we omit further considerations or tests with anisotropic kernels.
\end{remark}

\section{Numerical experiments}
\label{ne}

The numerical experiments that follow have been carried out with \textsc{Matlab} on an Intel(R) Core(TM) i7 CPU 4712MQ 2.13 GHz processor. 

In this section we consider scattered data on a cube $\Omega=[0,\gamma]^3$, $\gamma \in \mathbb{R}^{+}$. To better assess the robustness of the method, in Subsection \ref{kf} we take known implicit functions, while in Subsection \ref{rd} we deal with unknown 3D objects.

The RBFs considered in the examples are the Gaussian $C^{\infty}$ and the Wendland's $C^2$ functions, whose formulae respectively are
\begin{equation*}
\phi_1(r)= e^{-\varepsilon^2 r^2},
\end{equation*}
and
\begin{equation*}
\phi_2(r)= (1-\varepsilon r)_{+}^4(4 \varepsilon r+1),
\end{equation*}
where $r$ is the Euclidean distance, $\varepsilon$ is the shape parameter and $(\cdot)_{+}$ denotes the truncated power function. Note that the Wendland's $C^2$ function is also used for the computation of the PU weights. We remark that the Gaussian kernel usually leads to matrices with high condition numbers, while the Wendland's $C^2$ is more stable. Therefore, the latter is strictly advised for applications with real data.

The interpolants are evaluated on a grid of  $s=80 \times 80 \times 80$  points ${\cal X}_s=\{ \bar{\boldsymbol{x}}_i, i=1,\ldots,s \}$. Moreover, to point out the accuracy, for the known functions we compute the Root Mean Square Error (RMSE), while for real data it is estimated via cross-validation.

The RRBF-PU is applied  with spherical patches whose centres are a grid of points on $\Omega$ of radius 
\begin{equation*}
\delta={\dfrac{\gamma}{d^{1/M}}},
\end{equation*}
where the number of patches $d$ is given by
\begin{equation*}
d= \left( \left\lceil \dfrac{{N}^{1/M}}{2}  \right\rceil \right)^M.
\end{equation*}

In what follows we will compare the RRBF-PU with the classical scheme based on RBFs as local approximants, i.e. the RBF-PU. The former turns out to be more robust. Furthermore, it turns out to be also efficient (refer to \cite{rat0}). This is due to the fact that for the RRBF-PU the eigenvectors of the local kernel matrices are calculated by means of the DACG scheme
\cite{BergamaschiPutti02}. Indeed,  DACG  has been shown to be faster than the Lanczos method (standard \textsc{Matlab} routine {\tt eigs})  \cite{Arnoldi} when a small number of eigenpairs are being sought. 

Finally, comparisons with the BLOOCV-PU will be also carried out. Such technique, studied in \cite{cavoretto_loocv}, is based on an optimal selection of both the radius and shape parameter for each patch. We will show that, provided that points are not clustered, the RRBF-PU is competitive, especially because of its efficiency.

\subsection{Experiments with artificial data}
\label{kf}

The first test function we consider in these examples defines the easiest 3D object, i.e. a sphere. In particular,
\begin{equation*}
f_1(x_1,x_2,x_3)=(x_1-0.5)^2+(x_2-0.5)^2+(x_3-0.5)^2-0.5^2=0.
\end{equation*}
The second test is instead carried out by considering 
\begin{equation*}
f_2(x_1,x_2,x_3)=f_1(x_1,x_2,x_3)+\sin^4(4y)=0.
\end{equation*}

As data, we take four sets of random nodes ($n=1089,4225,16641, 66049$). An example of $1089$ data describing the surfaces is plotted in Figure \ref{f1f2}. 

\begin{figure}[ht!] 
	\begin{center}
		\makebox[\textwidth]{
			\includegraphics[height=.27\textheight]{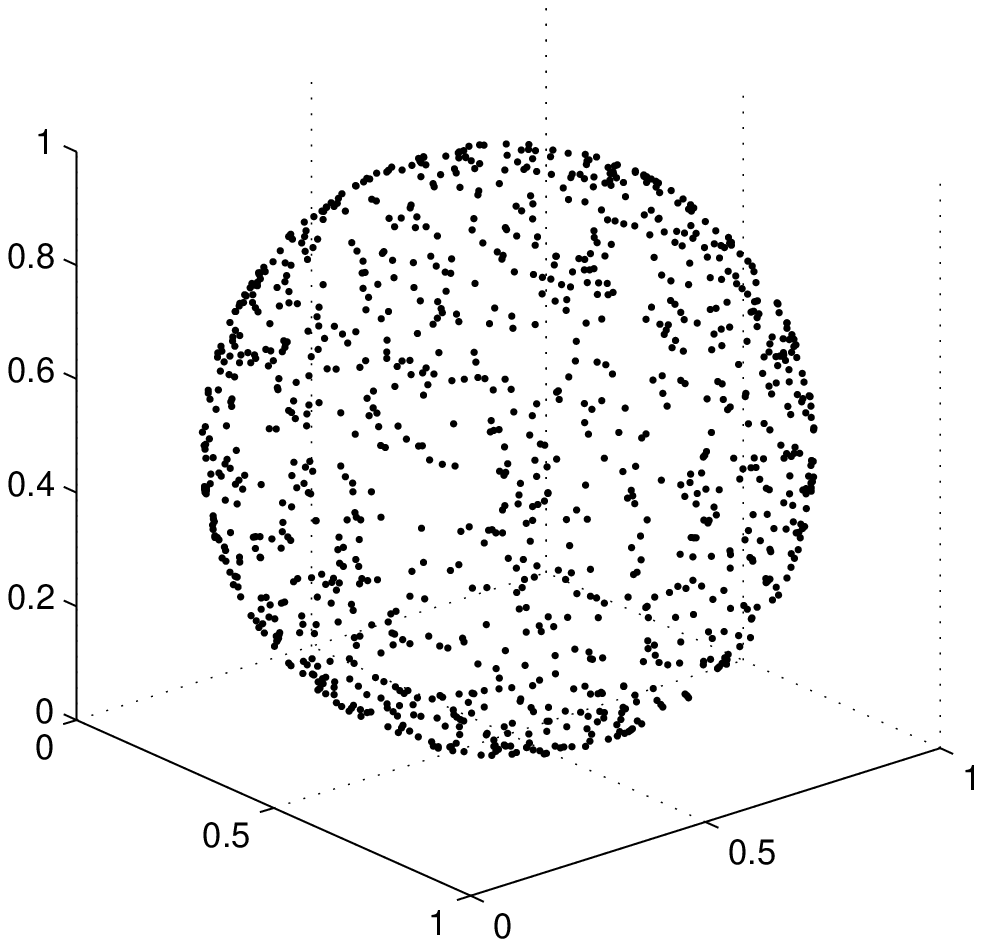} 
			\hskip -1.4cm
			\includegraphics[height=.27\textheight]{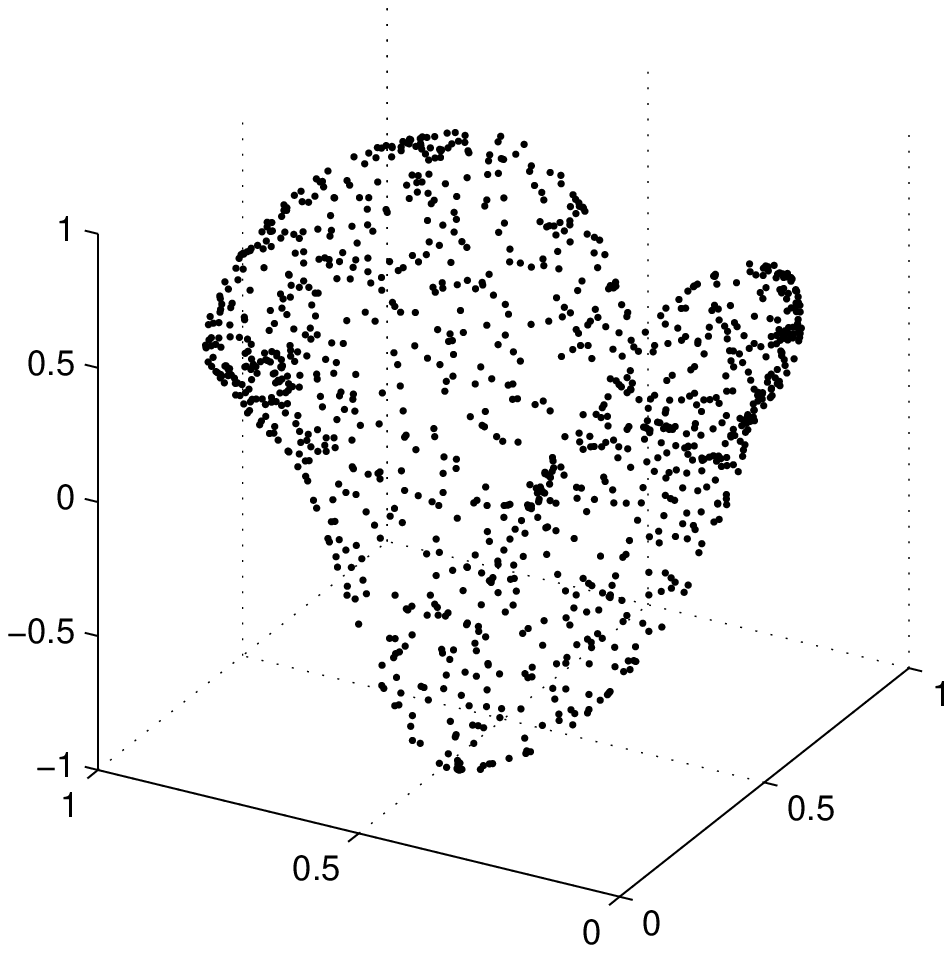}}
		\caption{Examples of point cloud data sets for $f_1$ (left) and $f_2$ (right).}
		\label{f1f2}
	\end{center}
\end{figure}	

We remark that, for a given set of $n$ data, we first compute and consistently orient the surface  normals so that we construct the augmented sets ${\cal X}_N$ and ${\cal F}_N$, as shown in Section \ref{sec1}, where $N \approx 3 n$. Then, we compute the RRBF-PU for these augmented sets and  we  evaluate the iso-surface corresponding to the original set of nodes.

In this subsection, we consider the Gaussian kernel as basis function. To  assess the behaviour of the error with respect to the shape parameter we evaluate  for  $f_1$ and $f_2$ the RMSEs (obtained with both the RBF-PU and RRBF-PU) for $20$ values of the shape parameter $\varepsilon$  in the range $[10^{-3}, 10^{2}]$.  Refer to Figure \ref{f7f8}.

\begin{figure}[ht!] 
	\begin{center}
		\makebox[\textwidth]{
			\includegraphics[height=.27\textheight]{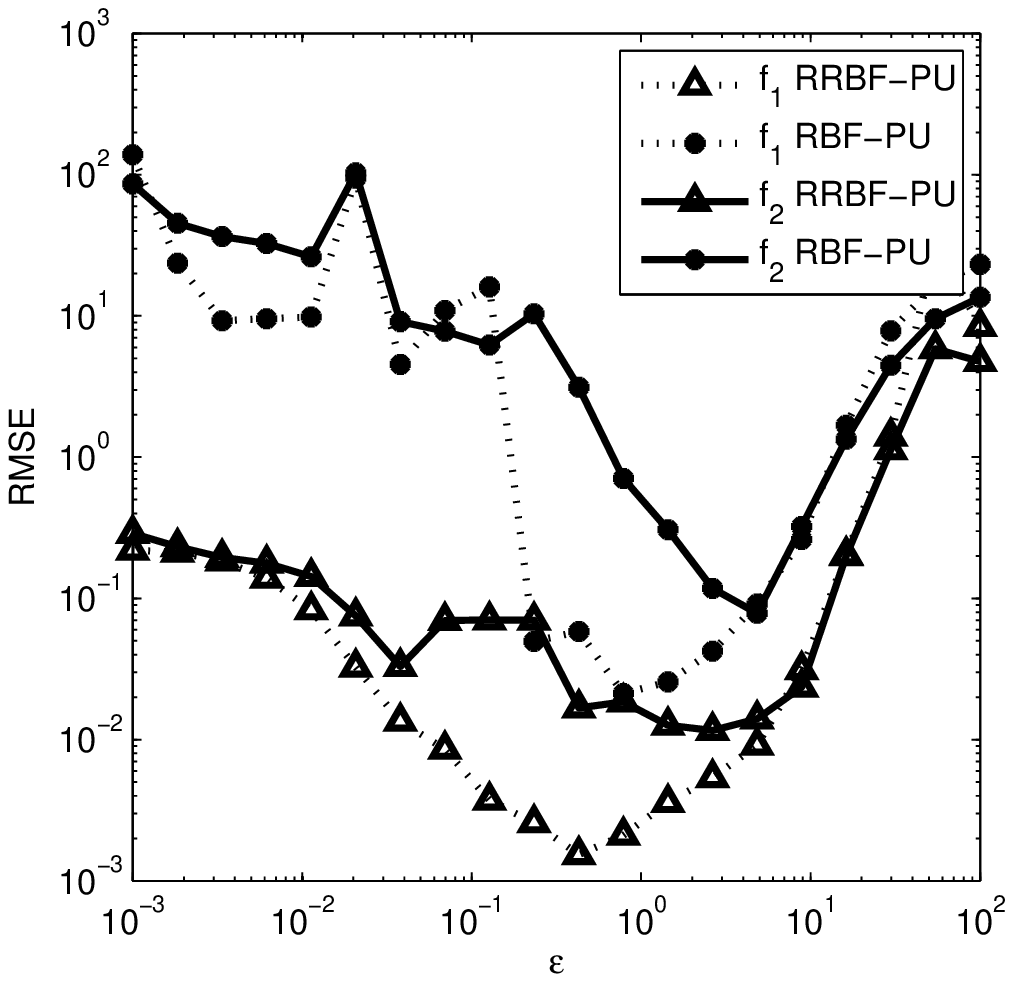} 
			\hskip -1.2cm
			\includegraphics[height=.27\textheight]{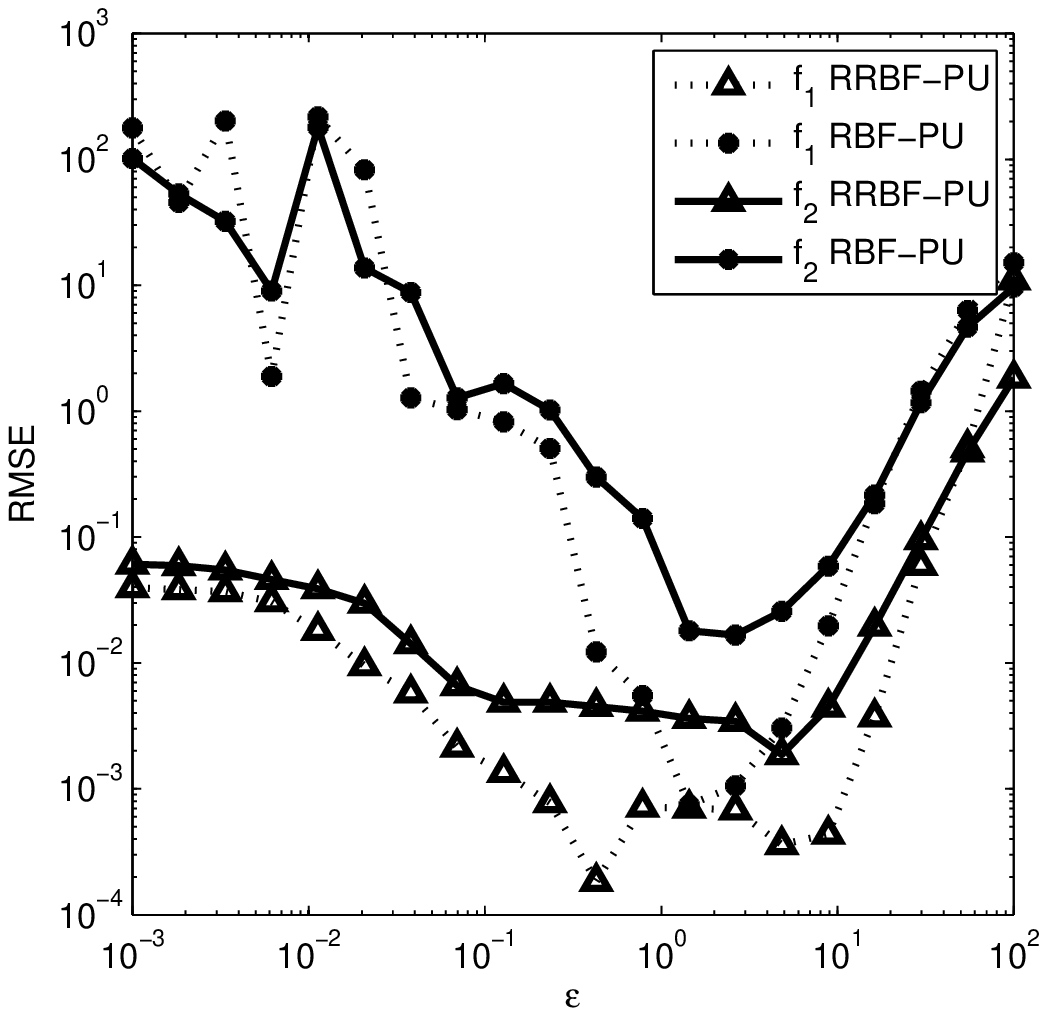}}
		\makebox[\textwidth]{
			\includegraphics[height=.27\textheight]{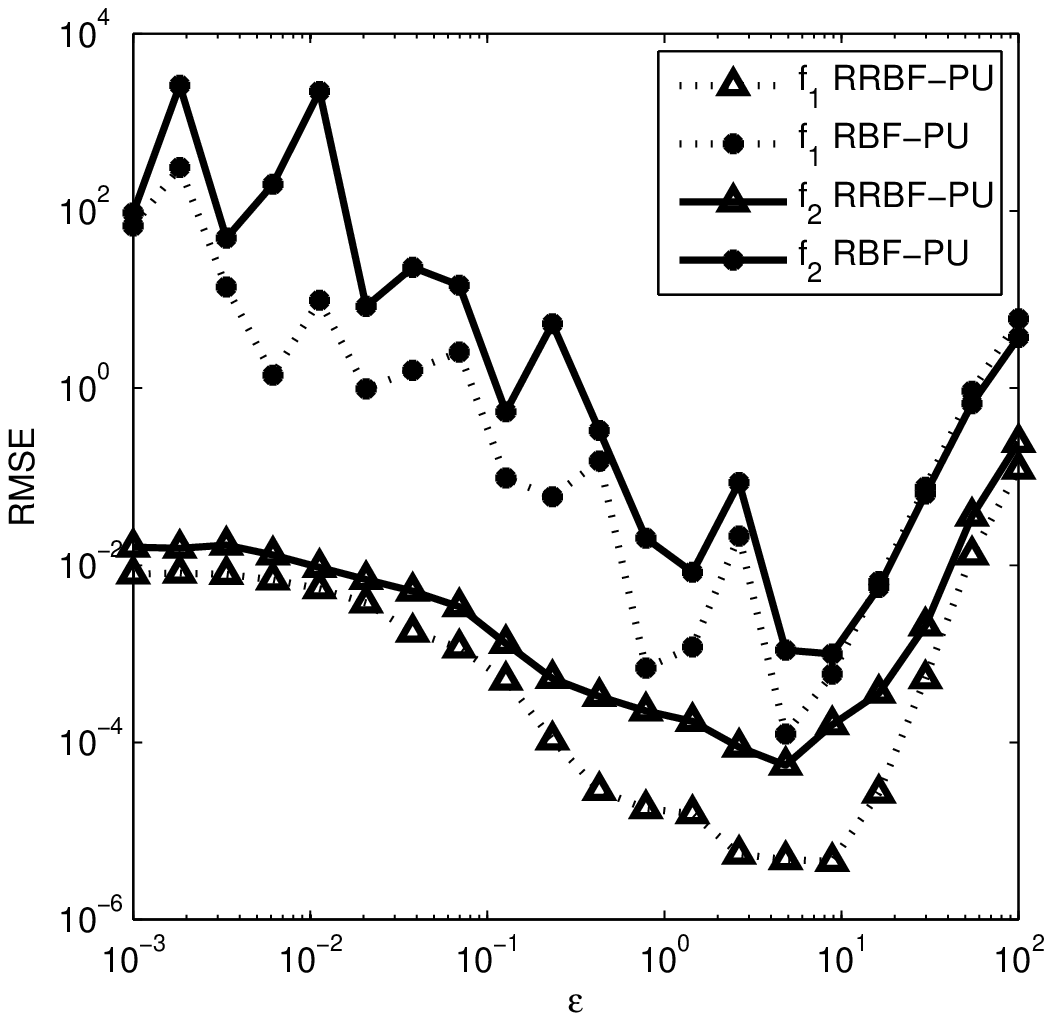} 
			\hskip -1.2cm
			\includegraphics[height=.27\textheight]{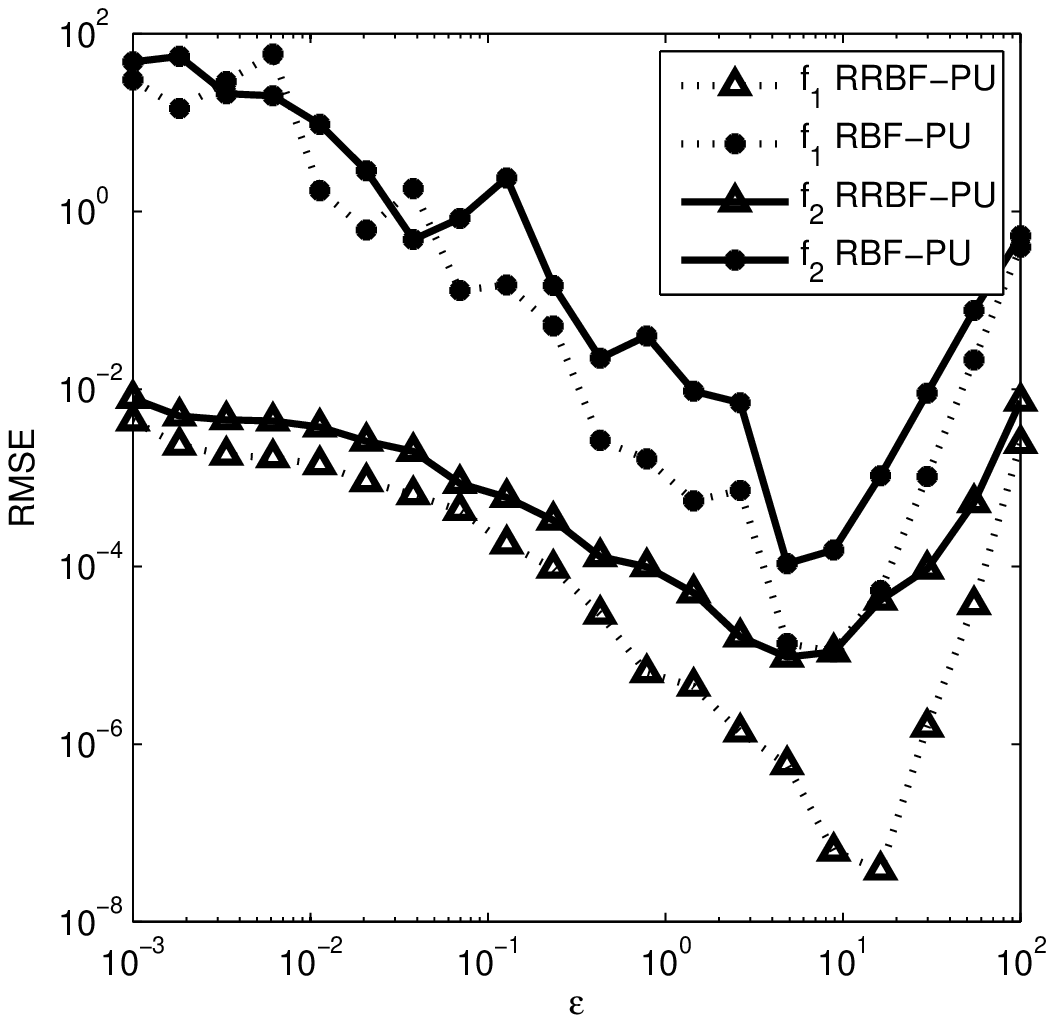}}			
		\caption{RMSEs obtained by varying $\varepsilon$ for the Gaussian $C^\infty$ kernel.  From left to right, top to bottom, we consider $n=$ $1089$, $4225$,  $16641$ and $66049$ random data. The dotted line represents the RMSEs (marked with dot and  triangle for RBF-PU and RRBF-PU, respectively) for $f_1$. The continuous line represents the RMSEs (marked with dot and  triangle for RBF-PU and RRBF-PU, respectively) for $f_2$.}
		\label{f7f8}
	\end{center}
\end{figure}	

Such study reveals that the RRBF-PU is more effective than the standard one. Furthermore, in Tables \ref{tab_1}--\ref{tab_2}, we also report the RMSE in correspondence of the optimal shape parameter for $f_1$ and $f_2$, respectively. It is evident that, especially for large values of $n$ the RRBF-PU outperforms the standard one. 
Finally, note that in Table \ref{tab_1}, we also report the results obtained via BLOOCV-PU. As expected, since it is able to find the optimal radius and the optimal shape parameter for each patch,  it turns out to be more accurate. Nevertheless, while the difference among RRBF-PU and BLOOCV-PU is not so evident in terms of accuracy, it is truly marked from the point of view of the efficiency. For instance, the classical RBF-PU  takes $3.96$ s to approximate a data set consisting of $n=1089$ points. The RRBF-PU, thanks to the DACG algorithm,  only requires $4.59$ s, while the BLOOCV-PU needs $35.2$ s. Thus, for quasi-uniform nodes, the RRBF-PU turns out to be accurate and efficient, while (because of the optimization process) the BLOOCV-PU has a high computational cost. Nevertheless, we remark that when points are clustered, then the use of the BLOOCV-PU is meaningful. Indeed, in this case, a suitable selection of the patch radius is essential to avoid loss of convergence orders on several subdomains.

\begin{table}[ht!]
	\begin{center}
		\begin{tabular}{cccc} \hline
			\rule[0mm]{0mm}{3ex}
			$n$  & $\varepsilon^*$ & Method & RMSE  \\
			\hline 
			\rule[0mm]{0mm}{3ex}
			$1089$   & $0.42$ & RBF-PU  & $1.67{\rm E}-2$   \\
			& -- & BLOOCV-PU  & $8.49{\rm E}-4$   \\
			& $0.42$ & RRBF-PU & $1.53{\rm E}-3$   \\	
			\rule[0mm]{0mm}{3ex}
			$4225$   & $1.43$ &  RBF-PU  & $7.45{\rm E}-4$   \\
			& -- & BLOOCV-PU  & $7.53{\rm E}-5$   \\
			& $0.42$ & RRBF-PU & $1.84{\rm E}-4$   \\	
			\rule[0mm]{0mm}{3ex}
			$16641$   & $4.83$ &  RBF-PU  & $1.23{\rm E}-4$   \\
			& -- & BLOOCV-PU  & $2.43{\rm E}-6$   \\
			& $8.85$ & RRBF-PU & $4.52{\rm E}-6$   \\
			\rule[0mm]{0mm}{3ex}
			$66049$  &  $8.85$ &  RBF-PU  & $1.10{\rm E}-5$   \\
			& -- & BLOOCV-PU  & $2.17{\rm E}-8$   \\
			& $16.24$ & RRBF-PU  & $3.80{\rm E}-8$   \\
			\hline 
		\end{tabular}
	\end{center}
	\caption{RMSEs for the optimal shape parameter $\varepsilon^*$ obtained for the test function $f_1$ and several sets of random nodes.  }
	\label{tab_1}
\end{table}

\begin{table}[ht!]
	\begin{center}
		\begin{tabular}{cccc} \hline
			\rule[0mm]{0mm}{3ex}
			$n$  & $\varepsilon^*$ & Method & RMSE  \\
			\hline 
			\rule[0mm]{0mm}{3ex}
			$1089$   & $4.83$ & RBF-PU  & $7.84{\rm E}-2$   \\
			& $2.63$ & RRBF-PU & $1.16{\rm E}-2$   \\	
			\rule[0mm]{0mm}{3ex}
			$4225$   & $2.63$ &  RBF-PU  & $1.66{\rm E}-2$   \\
			& $4.83$ & RRBF-PU & $1.86{\rm E}-3$   \\	
			\rule[0mm]{0mm}{3ex}
			$16641$   & $8.85$ &  RBF-PU  & $9.97{\rm E}-4$   \\
			& $4.83$ & RRBF-PU & $5.48{\rm E}-5$   \\
			\rule[0mm]{0mm}{3ex}
			$66049$  &  $4.83$ &  RBF-PU  & $1.08{\rm E}-4$   \\
			& $4.83$ & RRBF-PU  & $9.52{\rm E}-6$   \\
			\hline 
		\end{tabular}
	\end{center}
	\caption{RMSEs for the optimal shape parameter $\varepsilon^*$ obtained for the test function $f_2$ and several sets of random nodes.}
	\label{tab_2}
\end{table}

\subsection{Experiments with real data}
\label{rd}

Here we show with some numerical experiments the flexibility of the RRBF-PU for the reconstruction of 3D objects.
The data sets used in the following examples (available at  {\tt http://graphics.stanford.edu/data/3Dscanrep/}) correspond to various point cloud data set of the well-known \emph{Stanford Bunny} for $n=$ $453$, $1089$, $8171$ and $35974$.

The RBF used to approximate the 3D object is the Wendland's $C^2$  function.
In Figure \ref{f13f14}, we show the  graphical results for both RBF-PU and RRBF-PU of using $n=35974$ points. Also in this case, we recover the pattern already discovered about the fact that the RRBF-PU turns out to be more effective than the classical one.

\begin{figure}[ht!] 
	\begin{center}
		\makebox[\textwidth]{
			\includegraphics[height=.27\textheight]{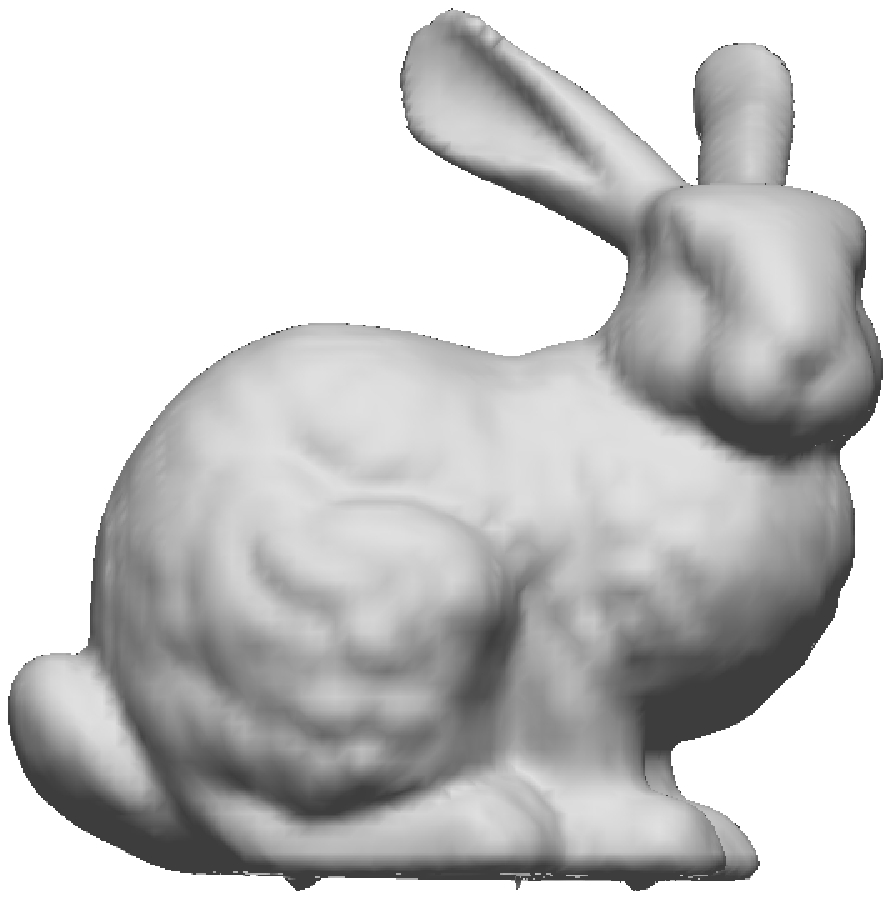} 
			\hskip -1.4cm
			\includegraphics[height=.27\textheight]{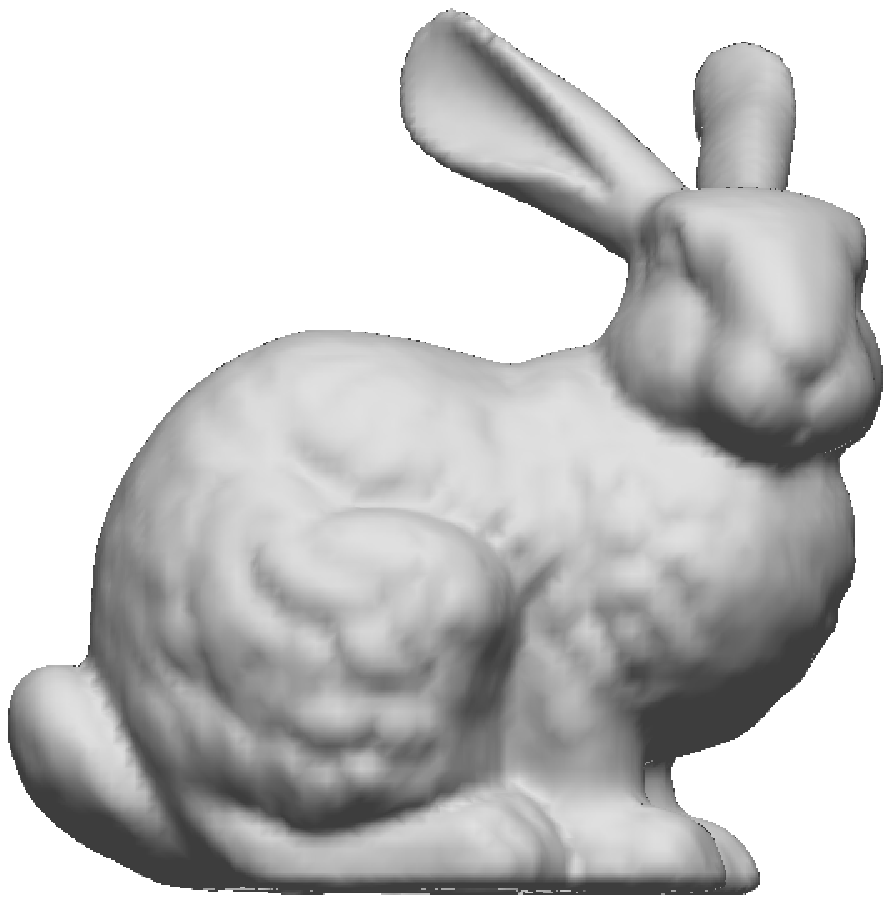}}
		\caption{The Stanford Bunny with $35947$ points reconstructed via the RBF-PU (left) and RRBF-PU (right) with $\varepsilon=1$.}	
		\label{f13f14}
	\end{center}
\end{figure}

To conclude,  as a confirm of the graphical results previously shown, we also report the estimated errors (via cross-validation) for the different data sets in Table \ref{tab_3}. In this case we fix $\varepsilon=1$. 

\begin{table}[ht!]
	\begin{center}
		\begin{tabular}{ccc} \hline
			\rule[0mm]{0mm}{3ex}
			$n$   & Method & RMSE  \\
			\hline 
			\rule[0mm]{0mm}{3ex}
			$453$   & RBF-PU  & $6.61{\rm E}-2$   \\
			& RRBF-PU & $4.41{\rm E}-2$   \\	
			\rule[0mm]{0mm}{3ex}
			$1889$   &  RBF-PU  & $5.17{\rm E}-2$   \\
			& RRBF-PU & $1.12{\rm E}-2$   \\	
			\rule[0mm]{0mm}{3ex}
			$8171$   &  RBF-PU  & $1.21{\rm E}-1$   \\
			& RRBF-PU & $6.35{\rm E}-3$   \\
			\rule[0mm]{0mm}{3ex}
			$35947$   &  RBF-PU  & $9.65{\rm E}-3$   \\
			& RRBF-PU  & $3.53{\rm E}-3$   \\
			\hline 
		\end{tabular}
	\end{center}
	\caption{The estimated (via cross validation) RMSEs for varius data sets of the Stanford Bunny.}
	\label{tab_3}
\end{table}		

\section{Final remarks}
\label{concl}						

This investigation reveals that the  RRBF-PU can be used as effective and efficient tool for the approximation of 3D objects. It takes advantage of being meshfree and more robust than a standard approach. 

Thus, as future work we need to carry out studies for coupling this scheme with the well-known stable methods (see e.g. \cite{Demarchi15,Fasshauer15,Larsson-Lehto}) and further investigations about the Lebesgue constant are also essential. To achieve this aim, we need to study the cardinal form of the rational expansion.

\section{Acknowledgments}     	
This research has been accomplished within Rete ITaliana di Approssimazione (RITA) and supported by:
\begin{itemize}
	\item GNCS-INdAM,
	\item the research project \emph{Radial basis functions approximations: stability issues and applications}, 
	No. BIRD167404. 
\end{itemize}

\end{document}